\providecommand{\U}[1]{\protect\rule{.1in}{.1in}}
\newtheorem{theorem}{Theorem}[section]
\newtheorem{definition}[theorem]{Definition}
\newtheorem{lemma}[theorem]{Lemma}
\newtheorem{proposition}[theorem]{Proposition}
\newcommand{\N}{\mathbb{N}} 
\newcommand{\K}{\mathcal{K}}
\newcommand{\D}{{\mathbf D}}
\newcommand{\E}{{\mathbf E}}
\newcommand{\C}{\mathcal{C}}
\newcommand{\F}{\mathcal{F}}
\newcommand{\G}{\mathcal{G}}
\newcommand{\cR}{\mathcal{R}}
\newcommand{\cS}{\mathcal{S}}
\newcommand{\cP}{\mathcal{P}}
\newcommand{\cQ}{\mathcal{Q}}
\newcommand{\dubCan}{2^{ \N \times \N}}
\newcommand{\lo}{{\bf LO^*}}
\newcommand{\wf}{{\bf WF^*}}
\newcommand{\leex}{{\le ^*_x}}
\newcommand{\lex}{{< ^*_x}}
\newenvironment{proof}[1][Proof]{\noindent\textbf{#1.} }{\ \rule{0.5em}{0.5em}}
\title{A note on derivatives, expansions and $\Pi^1_1$-ranks}
\author{Udayan B. Darji and Felipe García-Ramos}
\date{}
\begin{document}

\maketitle
\abstract{$\Pi_1^1$-ranks are a natural tool for studying coanalytic sets in descriptive set theory. In \cite{Kechris}, Kechris provided a technique to build $\Pi_1^1$-ranks using derivatives. In this note we will prove a variant of this result that is applicable to the $\Gamma$-rank. Some dynamical ranks, like the entropy rank can be stated in terms of the $\Gamma$-rank.}

\section{Introduction}

At the dawn of descriptive set theory, Lebesgue made an infamous mistake by assuming that the continuous projection of a Borel set was Borel. Suslin spotted this mistake 10 years later and began the study of \emph{analytic} or $\Sigma_1^1$ sets. One of tools which was developed later to understand the complexity of Borel sets and the difference between Borel and co-analytic sets were $\Pi_1^1$-ranks. One of the most well known $\Pi_1^1$-ranks is the Cantor-Bendixson rank. Kechris developed a very general set up were, using the concept of derivatives (or the dual version of expansions), one can prove that the Cantor-Bendixson rank, as well as several other natural ranks, are $\Pi_1^1$ (see Theorem \ref{BDerivatives})\cite{Kechris}. 

A natural rank that appears in dynamics as well as other areas of mathematics is the $\Gamma$-rank on product spaces. Though the $\Gamma$-rank can be stated in terms of expansions it does not fit exactly in the context of the dual of Theorem \ref{BDerivatives}. In this note, we adapt the proof given in \cite{Kechris} in order to show that the $\Gamma$-rank is a $\Pi_1^1$-rank. 
A concrete example of the $\Gamma$-rank is the entropy rank for topological dynamical systems that was introduced by Barbieri and the second author \cite{barbieri2020} to classify dynamical systems with completely positive entropy.  

We note that, using effective descriptive set theory, Westrick recently proved that the entropy (or TCPE) rank is an effective $\Pi_1^1$-rank \cite[Corollary 2]{westrick2019topological}. It is possible to transfer this result to the classic descriptive theory setting, nonetheless, the approach of this paper gives a direct proof using only classical descriptive set theory.   

\textbf{Acknowledgment:} The authors would like to thank Dominik Kwietniak,
Slawomir Solecki and Linda Westrick for motivating conversations. 
The second author was supported by the CONACyT grant 287764. 

\section{$\Pi^1_1$-ranks}
In this section we present the necessary basic definitions and background results concerning $\Pi^1_1$-ranks. We also prove a new result concerning Borel expansions. 

Recall that a subset of a Polish space is \textbf{analytic} or $\Sigma^1_1$ if it is the continuous image of a Borel set of a Polish space. Complements of $\Sigma^1_1$ sets are $\Pi^1_1$ or \textbf{coanalytic}.

\begin{definition}
Let $C$ be a set. A {\bf rank} on $C$ is a function $\varphi:C\rightarrow\omega_1$, where $\omega_1$ is the set of countable ordinals. Associated with $\varphi$ we have the relations $<_{\varphi}$ and $\le_{\varphi}$ defined as follows:
\[ x <_{\varphi} y \iff \varphi(x) < \varphi (y)
\]
\[ x \le_{\varphi} y \iff \varphi(x) \le \varphi (y).
\]
\end{definition}
\begin{definition}\label{pi11}
Let $X$ be a Polish space, $C \subseteq X$ and $\varphi : C \rightarrow \omega_1$ a rank on $C$. We say that $\varphi$ is a  {\bf  $\Pi^1_1$-rank} if $C$ is $\Pi^1_1$ and  there are relations $P, Q \subseteq X^2$, with one of them $\Sigma^1_1$ and the other $\Pi^1_1$, such that for all $y \in C$ we have that 
\begin{gather*}
    \{x \in C: \varphi (x) \le \varphi(y)\} = \{x\in X: (x,y) \in P\} = \{x \in X: (x,y) \in Q\}.
\end{gather*}
Loosely speaking, $\varphi$ is a $\Pi^1_1$-rank if $\{x: \varphi (x) \le \varphi(y)\} $ is "uniformly Borel in $y$".
\end{definition}
We will use the following reformulation of $\Pi^1_1$-rank in our proof.
\begin{proposition}\label{altpi11}\cite[Exercise 34.3]{Kechris} Let $X, C, \varphi$ as in Definition~\ref{pi11}. Then, $\varphi$ is a $\Pi^1_1$-rank if and only if there  are $\Sigma^1_1$ relations $P, Q \subseteq X^2$ such that for all $y \in C$ we have that
    \begin{align*}
    & \{x \in C: \varphi (x) \le \varphi(y)\} = \{x\in X: (x,y) \in P\},\text{ and} \\ 
    & \{x \in C: \varphi (x) < \varphi(y)\}= \{x \in X: (x,y) \in Q\}.
\end{align*}
\end{proposition}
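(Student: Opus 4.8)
The plan is to prove both implications by moving between the ``$\le_\varphi$-cones'' $\{x\in C:\varphi(x)\le\varphi(y)\}$ and the ``$<_\varphi$-cones'' $\{x\in C:\varphi(x)<\varphi(y)\}$, exploiting that for $x,y\in C$ one has $\varphi(x)<\varphi(y)\iff\neg(\varphi(y)\le\varphi(x))$ (linearity of the ordinals). For $R\subseteq X^2$ write $R_y=\{x:(x,y)\in R\}$. I will treat ``$C$ is $\Pi^1_1$'' as a standing hypothesis throughout, since it is built into the notion of a $\Pi^1_1$-rank.

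For the forward direction, I would start from a $\Sigma^1_1$ relation $A$ and a $\Pi^1_1$ relation $B$ furnished by Definition~\ref{pi11}, so that $A_z=B_z=\{x\in C:\varphi(x)\le\varphi(z)\}$ for every $z\in C$. I would then simply take $P:=A$, and set $Q:=\{(x,y):(x,y)\in A\ \text{and}\ (y,x)\notin B\}$, which is $\Sigma^1_1$ since $X^2\setminus B$ is $\Sigma^1_1$ and the swap of coordinates is continuous. Checking $Q_y=\{x\in C:\varphi(x)<\varphi(y)\}$ for $y\in C$ is a short computation: membership $(x,y)\in A$ already forces $x\in C$ and $\varphi(x)\le\varphi(y)$, and then ``$(y,x)\notin B$'' is equivalent, by the defining identity with $x$ in the role of $z$ (legitimate precisely because $x\in C$), to $\neg(\varphi(y)\le\varphi(x))$, i.e.\ to $\varphi(x)<\varphi(y)$; the reverse inclusion is this computation read backwards.

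For the reverse direction, given $\Sigma^1_1$ relations $P,Q$ as in the statement I would take $A:=P$ and $B:=\{(x,y):x\in C\ \text{and}\ (y,x)\notin Q\}$; here $B$ is $\Pi^1_1$ because $C$ is $\Pi^1_1$ and $X^2\setminus Q$ is $\Pi^1_1$. The identity $B_y=\{x\in C:\varphi(x)\le\varphi(y)\}$ for $y\in C$ follows the same way: for $x\in C$, ``$(y,x)\notin Q$'' is equivalent, by the defining identity for $Q$ with $x$ in the role of $z$, to $\neg(\varphi(y)<\varphi(x))$, i.e.\ to $\varphi(x)\le\varphi(y)$. Since $A$ is $\Sigma^1_1$, $B$ is $\Pi^1_1$, and both cut out $\{x\in C:\varphi(x)\le\varphi(y)\}$ over $y\in C$, Definition~\ref{pi11} then gives that $\varphi$ is a $\Pi^1_1$-rank.

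The only real obstacle I anticipate is bookkeeping rather than ideas: the hypothesised identities are asserted only for $y\in C$, so each time I instantiate one of them at a point playing the role of the parameter I must first know that point lies in $C$. This is why, in both directions, I extract membership in $C$ from the $\Sigma^1_1$ side before invoking an identity at a shifted coordinate; once this discipline is observed the two verifications are routine.
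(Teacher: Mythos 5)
Your argument is correct: the paper states this proposition as a citation to Kechris's Exercise 34.3 without proof, and your construction --- taking $P:=A$ and $Q:=\{(x,y):(x,y)\in A\ \&\ (y,x)\notin B\}$ in one direction, and $A:=P$, $B:=\{(x,y):x\in C\ \&\ (y,x)\notin Q\}$ in the other --- is exactly the standard solution. Your explicit care about only instantiating the hypothesised identities at parameters already known to lie in $C$, and your reading of ``$C$ is $\Pi^1_1$'' as a standing hypothesis (needed for $B$ to be $\Pi^1_1$ in the reverse direction), are both the right calls.
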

The following are fundamental results on $\Pi^1_1$-ranks \cite{Kechris}.
\begin{theorem}
Every $\Pi^1_1$ set admits a $\Pi^1_1$-rank.
\end{theorem}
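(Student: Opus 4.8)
\noindent The plan is to reduce the general statement to the single canonical example of a $\Pi^1_1$-rank — the order-type rank on the set of well-orderings of $\N$ — and then transport it along a Borel reduction. Let $\mathrm{Tr}$ denote the Polish space of trees on $\N$ and $\mathrm{WF} \subseteq \mathrm{Tr}$ the set of well-founded trees. Recall the tree normal form: every $\Pi^1_1$ set $C \subseteq X$ admits a Borel map $x \mapsto T_x$ from $X$ to $\mathrm{Tr}$ with $C = \{x : T_x \in \mathrm{WF}\}$ (write $X \setminus C$ as the projection of a closed subset of $X \times \N^\N$ and let $T_x$ be the section tree of finite approximations). Composing with the Kleene--Brouwer ordering, $x \mapsto \mathrm{KB}(T_x)$ is then a Borel map into the (Borel) space $\mathrm{LO} \subseteq 2^{\N \times \N}$ of linear orderings of subsets of $\N$, and $C = \{x : \mathrm{KB}(T_x) \in \mathrm{WO}\}$, where $\mathrm{WO} \subseteq \mathrm{LO}$ is the set of well-orderings. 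So it suffices to (i) prove that the order-type map $\varphi_{\mathrm{WO}} : \mathrm{WO} \to \omega_1$ is a $\Pi^1_1$-rank, and (ii) record the elementary fact that being a $\Pi^1_1$-rank is preserved under pullback along Borel maps.

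For (ii): if $\psi : D \to \omega_1$ is a $\Pi^1_1$-rank on $D \subseteq Y$ with $\Sigma^1_1$ witnesses $P, Q$ as in Proposition~\ref{altpi11}, and $g : X \to Y$ is Borel with $C = g^{-1}(D)$, then $C$ is $\Pi^1_1$ and $\psi \circ g$ is a $\Pi^1_1$-rank on $C$ with witnesses $(g \times g)^{-1}(P)$ and $(g \times g)^{-1}(Q)$; these are $\Sigma^1_1$ because preimages of $\Sigma^1_1$ sets under Borel maps are $\Sigma^1_1$, and the two required set equalities are immediate from the definitions. Applying this with $g(x) = \mathrm{KB}(T_x)$ and $\psi = \varphi_{\mathrm{WO}}$ will finish the proof once (i) is established.

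For (i), the heart of the matter: $\mathrm{WO}$ is $\Pi^1_1$ (a linear order is a well-order iff it has no infinite $<$-descending sequence, which is $\Pi^1_1$). By Proposition~\ref{altpi11} it is enough to produce $\Sigma^1_1$ relations $P, Q$ on $(2^{\N \times \N})^2$ such that for every $y \in \mathrm{WO}$ we have $P_y = \{x \in \mathrm{WO} : \varphi_{\mathrm{WO}}(x) \le \varphi_{\mathrm{WO}}(y)\}$ and $Q_y = \{x \in \mathrm{WO} : \varphi_{\mathrm{WO}}(x) < \varphi_{\mathrm{WO}}(y)\}$. I would define $(x,y) \in P$ iff $x, y \in \mathrm{LO}$ and there exists $f \in \N^\N$ restricting to an order-preserving injection from the field of $x$ into the field of $y$, and $(x,y) \in Q$ iff moreover the range of $f$ is contained in the set of $<_y$-predecessors of some $b$ in the field of $y$. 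For fixed $x, y$ the set of admissible $f$ is closed in $\N^\N$ (cut out by countably many conditions, each depending on finitely many coordinates of $f$ and on membership facts about $x, y$), so $P$ and $Q$ are projections of Borel sets, hence $\Sigma^1_1$.

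The remaining point — and the step I expect to be the main obstacle, though it is entirely classical — is the combinatorial lemma: for $x \in \mathrm{LO}$ and $y \in \mathrm{WO}$, one has $(x,y) \in P$ if and only if $x \in \mathrm{WO}$ and $\varphi_{\mathrm{WO}}(x) \le \varphi_{\mathrm{WO}}(y)$, and similarly $(x,y) \in Q$ if and only if $x \in \mathrm{WO}$ and $\varphi_{\mathrm{WO}}(x) < \varphi_{\mathrm{WO}}(y)$. The forward directions of these equivalences are the standard isomorphic embeddings of one ordinal into a larger ordinal (respectively into a proper initial segment of it). The reverse directions rely on the fact that every order-preserving map between well-orders is expansive (by transfinite induction $f(\alpha) \ge \alpha$); thus an order-embedding of a linear order $x$ into the well-order $y$ both forces $x$ to be well-ordered — so the \emph{a priori} possibility $x \notin \mathrm{WO}$ costs nothing, since $x$ is required only to lie in $\mathrm{LO}$ — and forces $\varphi_{\mathrm{WO}}(x) \le \varphi_{\mathrm{WO}}(y)$, with the strict version obtained by bounding within an initial segment below some $b$. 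With this lemma, $P$ and $Q$ witness (i) via Proposition~\ref{altpi11}, and combining (i), (ii), and the reduction $x \mapsto \mathrm{KB}(T_x)$ yields a $\Pi^1_1$-rank on the arbitrary $\Pi^1_1$ set $C$.
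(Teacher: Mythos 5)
The paper itself gives no proof of this theorem: it is quoted as a classical background fact with a citation to Kechris, and the machinery you construct in step (i) --- the coded linear orders, the $\Pi^1_1$-completeness of the set of well-orderings, and the order-type rank --- is precisely what the paper later imports as ``known'' (the sets $\lo$, $\wf$ and the rank $x \mapsto |x|^*$) inside its proof of Theorem~\ref{BorelExp}. Your argument is the standard proof of Kechris's Theorem 34.4 and is essentially correct: the pullback observation (ii) checks out exactly as you state it (for $y \in C$ one has $g(y) \in D$, so the defining equality for $P$ at the parameter $g(y)$ transports back through $g$), and the embedding relations $P$, $Q$ on $\mathrm{LO}^2$, combined with the expansiveness $f(\alpha) \ge \alpha$ of order-preserving maps between well-orders, do give the required $\Sigma^1_1$ witnesses for the order-type rank on $\mathrm{WO}$, including the degenerate cases of empty fields. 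The one place where your write-up is looser than it should be is the Borelness of $x \mapsto T_x$: for a closed set $F \subseteq X \times \N^\N$ the naive section tree $\{s : N_s \cap F_x \neq \emptyset\}$ is a priori only analytic in $x$. The standard repair is to write the complement of $F$ as a countable union of rectangles $W_k \times N_{s_k}$ with $W_k$ open in $X$, and to put $s \in T_x$ iff there is no $k$ with $x \in W_k$ and $s_k$ an initial segment of $s$; then $\{x : s \in T_x\}$ is closed, $[T_x] = F_x$, and the reduction is Borel. With that (entirely classical) adjustment, your proof is complete.
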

\begin{theorem}
Let $C$ be a $\Pi^1_1$ set and $\varphi$ be a $\Pi^1_1$-rank on $C$. If $A \subseteq C$ is $\Sigma^1_1$, then $\varphi$ is bounded on $A$, i.e., there exists $\alpha < \omega_1$ such that $\varphi(x) < \alpha$ for all $x \in A$. In particular,

\[ C \mbox{ is Borel} \iff \varphi \mbox{ is bounded on } C.
\]
\end{theorem}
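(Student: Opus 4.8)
The plan is to reduce the main assertion to the Kunen--Martin theorem — every $\Sigma^1_1$ well-founded relation on a Polish space has countable rank — and then read off the two directions of the equivalence as short corollaries.

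For the main assertion, fix a $\Sigma^1_1$ set $A\subseteq C$. I would first apply Proposition~\ref{altpi11} to obtain $\Sigma^1_1$ relations $P,Q\subseteq X^2$ with $\{x:(x,y)\in P\}=\{x\in C:\varphi(x)\le\varphi(y)\}$ and $\{x:(x,y)\in Q\}=\{x\in C:\varphi(x)<\varphi(y)\}$ for every $y\in C$. Since $A\subseteq C$, the relation $\prec\,:=(A\times A)\cap Q$ is exactly $\{(x,y)\in A^2:\varphi(x)<\varphi(y)\}$, and it is $\Sigma^1_1$ because $\Sigma^1_1$ sets are closed under finite intersection. It is well-founded, since an infinite $\prec$-descending chain would yield an infinite strictly decreasing chain of countable ordinals via $\varphi$. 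Moreover, selecting one $a_\xi\in A$ with $\varphi(a_\xi)=\xi$ for each $\xi\in\varphi[A]$ produces a $\prec$-chain order-isomorphic to $\varphi[A]$, so $\mathrm{rank}(\prec)\ge\mathrm{ot}(\varphi[A])$. By the Kunen--Martin theorem, $\mathrm{rank}(\prec)<\omega_1$; hence $\varphi[A]$ has countable order type, and since $\omega_1$ is regular it is bounded below $\omega_1$, which is the claim.

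The equivalence then follows quickly. If $C$ is Borel then $C$ is $\Sigma^1_1$, so taking $A=C$ in the assertion just proved bounds $\varphi$ on $C$. Conversely, suppose $\varphi(x)<\alpha$ for all $x\in C$ with $\alpha<\omega_1$. For each fixed $y\in C$, Definition~\ref{pi11} presents $\{x\in C:\varphi(x)\le\varphi(y)\}$ both as the $y$-section of a $\Sigma^1_1$ set and as the $y$-section of a $\Pi^1_1$ set, hence as a Borel subset of $X$. Picking one $y_\xi\in C$ with $\varphi(y_\xi)=\xi$ for each $\xi$ in the countable set $\varphi[C]\subseteq\alpha$, we get $C=\bigcup_{\xi\in\varphi[C]}\{x\in C:\varphi(x)\le\xi\}$, a countable union of Borel sets, so $C$ is Borel.

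The only substantial ingredient is the Kunen--Martin theorem; the rest is bookkeeping. The one point worth care is that the well-founded relation must genuinely be $\Sigma^1_1$: this is why I take the strict relation $\prec$ from the $Q$ of Proposition~\ref{altpi11}, rather than trying to extract the strict part of $\le_\varphi$, whose strict part is only $\Sigma^1_1\cap\Pi^1_1$ a priori. If one wished to avoid Kunen--Martin, one could instead observe that unboundedness of $\varphi$ on $A$ makes $C=\{x:\exists y\in A\,(x,y)\in P\}$ a $\Sigma^1_1$ — hence Borel — set; but deriving a contradiction when $C$ is itself Borel needs roughly the same input, so the direct route via Kunen--Martin is the cleanest.
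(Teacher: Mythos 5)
Your proof is correct. One caveat on the comparison: the paper does not prove this statement at all --- it is stated as a classical fact and attributed to Kechris (it is the Boundedness Theorem for $\Pi^1_1$-ranks) --- so there is no in-paper argument to measure you against; your write-up supplies a proof the paper omits. The route you take is a standard and complete one: restricting the strict relation $Q$ furnished by Proposition~\ref{altpi11} to $A\times A$ does give exactly $\{(x,y)\in A^2:\varphi(x)<\varphi(y)\}$ (using $A\subseteq C$ on both coordinates), this relation is $\Sigma^1_1$ and well-founded, the chain $\{a_\xi\}$ shows its rank is at least $\mathrm{ot}(\varphi[A])$, and Kunen--Martin plus regularity of $\omega_1$ give boundedness; your converse direction correctly invokes both halves of Definition~\ref{pi11} together with Suslin's theorem to see that each $\{x\in C:\varphi(x)\le\varphi(y_\xi)\}$ is Borel, and writes $C$ as a countable union of these. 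The alternative you sketch in your closing remark --- if $\varphi$ were unbounded on $A$ then $C=\{x:\exists y\in A\,(x,y)\in P\}$ is $\Sigma^1_1$, hence Borel, which reduces the general statement to the case of a Borel $C$ --- is the other classical organization of this proof; as you note, it does not avoid the substantive well-foundedness input, it only isolates it in the Borel case, so nothing is lost by going through Kunen--Martin directly. Two tiny points worth making explicit if you expand this: choosing $a_\xi$ (and $y_\xi$) uses a harmless amount of choice, and the inequality $\mathrm{rank}(\prec)\ge\mathrm{ot}(\varphi[A])$ is the standard fact that a well-founded relation containing a chain of order type $\beta$ has rank at least $\beta$, proved by transfinite induction along the chain.
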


We next recall the  notion of derivatives and how it induces $\Pi^1_1$-ranks in a natural way \cite[Section~34.D]{Kechris}.
Let $\K(X)$ denote the space of all compact subsets of $X$ endowed with the Hausdorff metric. 
\begin{definition} 
A map $\D:\K(X) \rightarrow \K(X)$ is a {\bf derivative} if the following holds:
\[ \D(A) \subseteq A \ \ \ \ \& \ \ \ A \subseteq B \implies \D(A) \subseteq \D(B). \] 

\end{definition}
 Derivatives appear in a variety of contexts and they induce $\Pi^1_1$-ranks in a natural way. For  a derivative $\D$, let 
 \begin{align*}
    &\D ^{0} (A)  =  A\\
    &\D ^{\alpha+1}  =  \D (\D^{\alpha}(A))\\
    &\D ^{\lambda} (A) = \cap _{\beta < \lambda} \D^{\beta}(A) \textit { if } \lambda \textit{ is a limit ordinal.}
\end{align*}
Let $A \in \K(X)$. Then, there exists a countable ordinal $\alpha$ such that $\D^{\alpha} = \D^{\alpha+1}$. Such an ordinal exists since in a separable metric space a chain of strictly decreasing sequence of closed sets must be countable. We let $|A|_{\D}$ be the least such $\alpha$. Moreover, we let $\D^{\infty} (A)=\D^{|A|_{\D}}$, i.e., the stable part of $A$. 

A useful classical Borel derivative is the Cantor-Bendixson derivative given by 
\[ A \rightarrow A' \] 
where $A'$ is the set of limit-points of $A$ \cite[Theorem 6.11]{Kechris}. The $\alpha^{th}$ Cantor-Bendixson derivative of $A$ is denoted by $A^{\alpha}$, $|A|_{CB}$ denotes least ordinal $\alpha$ such that $A^{\alpha +1} = A^{\alpha}$, and  $A^{\infty} = A^{|A|_{CB}}$, i.e.,  the stable part of $A$.

The following is an important theorem which relates derivatives to $\Pi^1_1$-ranks.
\begin{theorem}\label{BDerivatives} \cite[Theorem 34.10]{Kechris}
Let $\D:\K(X) \rightarrow \K(X)$ be a Borel derivative and 
\[ \C = \{A \in \K(X): D^{\infty}(A) = \emptyset\}.\]
Then, $\C$ is $\Pi^1_1$ and $\varphi : \C \rightarrow \omega_1$ defined by $\varphi(A)= |A|_{\D}$ is a $\Pi^1_1$-rank on $\C$.
\end{theorem}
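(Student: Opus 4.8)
The statement has two halves, and I would prove them separately: that $\C$ is $\Pi^1_1$, and that $\varphi(A)=|A|_{\D}$ is a $\Pi^1_1$-rank on $\C$.

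\emph{$\C$ is $\Pi^1_1$.} The plan is to trade the transfinite condition $\D^\infty(A)=\emptyset$ for a single quantifier over $\K(X)$. I would first prove the ``$\D$-perfect kernel'' characterization: $\D^\infty(A)=\emptyset$ if and only if there is no non-empty $K\in\K(X)$ with $K\subseteq A$ and $K\subseteq\D(K)$. Indeed, if $\D^\infty(A)\neq\emptyset$ then $K=\D^\infty(A)$ works, since it is a non-empty compact subset of $A$ and $\D(\D^\infty(A))=\D^\infty(A)$ because the iteration has stabilised; conversely, if such a $K$ exists, a transfinite induction on $\beta$ (base case $K\subseteq A$; successor step from $K\subseteq\D(K)$ and monotonicity of $\D$; limit step from the definition of $\D^{\lambda}$) gives $K\subseteq\D^{\beta}(A)$ for all $\beta$, so $\emptyset\neq K\subseteq\D^\infty(A)$. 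Since $\D$ is Borel and the relations $\{(K,L)\in\K(X)^2: K\subseteq L\}$ and $\{K:K\neq\emptyset\}$ are Borel, the set $R=\{(K,A):K\neq\emptyset,\ K\subseteq A,\ K\subseteq\D(K)\}$ is Borel, and therefore $\C=\{A:\forall K\ (K,A)\notin R\}$ is the complement of a projection of a Borel set, hence $\Pi^1_1$.

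\emph{$\varphi$ is a $\Pi^1_1$-rank.} Here I would use Proposition~\ref{altpi11} and build $\Sigma^1_1$ relations $P,Q\subseteq\K(X)^2$ with $P_B=\{A\in\C:|A|_{\D}\le|B|_{\D}\}$ and $Q_B=\{A\in\C:|A|_{\D}<|B|_{\D}\}$ for every $B\in\C$. The template is the classical proof that the rank on well-founded trees (equivalently the order-type rank on codes for well-orders) is a $\Pi^1_1$-rank, where ``$|e'|\le|e|$'' is witnessed analytically by an order-embedding. The witness for ``$|A|_{\D}\le|B|_{\D}$'' would be a \emph{comparison datum}: a linear order $e$ on $\N$ together with two labellings $n\mapsto A_n$ and $n\mapsto B_n$ of compact sets, each of which is a legal transfinite $\D$-derivation \emph{along $e$} (the $<_e$-least node carries $A$, resp.\ $B$; an immediate $<_e$-successor carries $\D$ applied to its predecessor's set; a $<_e$-limit carries the intersection of all earlier sets), and such that some node carries $\emptyset$ under both labellings. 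Using that $\D$ is Borel and that the usual operations on $\K(X)$ (membership, inclusion, countable intersection) are Borel, ``being a comparison datum'' is a Borel condition, so — because we never assert that $e$ is a well-order — existentially quantifying it out keeps $P$ at the level $\Sigma^1_1$. For the forward inclusion, when $B\in\C$ and $|A|_{\D}\le|B|_{\D}$ one simply takes $e$ to be a genuine well-order of $\N$ whose order type exceeds $|B|_{\D}$, together with the canonical $\D$-derivations of $A$ and $B$ along it; the node at stage $|B|_{\D}$ witnesses $\emptyset$ for both, since $\D^{|B|_{\D}}(B)=\emptyset$ and $\D^{|B|_{\D}}(A)=\emptyset$. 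The relation $Q$ is obtained the same way but requiring a node carrying $\emptyset$ under the $A$-labelling while carrying a non-empty set under the $B$-labelling, and Proposition~\ref{altpi11} then yields that $\varphi$ is a $\Pi^1_1$-rank.

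The part I expect to be the real obstacle is the reverse inclusion: showing that for $B\in\C$ \emph{every} comparison datum forces $A\in\C$ and $|A|_{\D}\le|B|_{\D}$, even though $e$ was only assumed to be a linear order. This is the familiar pitfall in converting transfinite iterations into $\Pi^1_1$-ranks — one must rule out that the analytic witness is met in a spurious, ill-founded way, which would secretly raise the complexity to $\Sigma^1_2$. The resolution, as in the well-order prototype, is to exploit that the target $B$ already lies in $\C$: one proves a small lemma to the effect that a legal $\D$-derivation of a set in $\C$ along a linear order must reach $\emptyset$ \emph{within the well-founded initial segment} of that order, at an ordinal stage no smaller than its $\D$-rank, and that on that initial segment the second labelling is then forced to coincide with the honest derivation of $A$ truncated to the same length; comparing the two then gives $|A|_{\D}\le|B|_{\D}$ (and the analogous bounded statement for $Q$). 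Along the way one also owes the routine verifications that $A\mapsto\D(A)$, the membership relation $\{(x,K):x\in K\}$, and countable intersections in $\K(X)$ are Borel, so that every condition named above is genuinely Borel.
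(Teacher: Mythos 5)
Your first half is correct and is the standard argument: the ``$\D$-perfect kernel'' characterization ($\D^{\infty}(A)=\emptyset$ iff no nonempty compact $K\subseteq A$ satisfies $K\subseteq\D(K)$) reduces $\C$ to the complement of a projection of a Borel set, and it is the exact dual of what the paper does for expansions in Theorem~\ref{BorelExp} (there $A\notin\C$ iff $A$ is contained in some fixed point $B\neq X$ of $\E$). The overall architecture of your second half --- Proposition~\ref{altpi11}, witnesses coded by linear orders on $\N$, a Borel successor $x\mapsto x'$ on codes to get $Q$ from $P$ --- is also the right one and matches the paper. But the specific witness relations you define do not work, for two separate reasons.

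First, your $P$ fails already for honest well-ordered witnesses. A derivation that has reached $\emptyset$ stays at $\emptyset$, so the clause ``some node carries $\emptyset$ under both labellings'' holds for \emph{every} pair $A,B\in\C$ once the well-order is long enough: if $|A|_{\D}=5$ and $|B|_{\D}=3$ and $e$ has type $10$, the node of rank $5$ carries $\emptyset$ under both labellings, so $(A,B)\in P$ although $|A|_{\D}>|B|_{\D}$. Thus $P_B$ would be all of $\C$ rather than $\{A\in\C:|A|_{\D}\le|B|_{\D}\}$. Second, the lemma you propose to dispose of ill-founded witnesses is false. For any linear order $e$ with well-founded initial segment $W$, the complement of $W$ has no $<_e$-least element, so every node off $W$ has an immediate predecessor off $W$ (if it is a successor node) or some predecessor off $W$ (if it is a limit node); hence the labelling ``honest derivation on $W$, constantly $\emptyset$ off $W$'' is \emph{always} legal, even when $A\notin\C$ or when $|A|_{\D}$ exceeds the order type of $W$. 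Worse, on an order such as $\omega+\zeta$ (with $\zeta$ the integers) the ill-founded block contains no limit nodes and is not linked to $W$ by any local condition, so it may be labelled by an arbitrary two-sided $\D$-orbit, e.g.\ constantly equal to a nonempty fixed point of $\D$; pairing that with the $\emptyset$-extension for $A$ puts $(A,B)$ into your $Q$ regardless of the ranks. Exact equality at each node simply transmits no information across the ill-founded part, and you cannot demand $e\in\wf$ without destroying analyticity. The repair is the one Kechris uses and the paper carries out in the dual expansion setting: the witness $h$ satisfies $h(0)=A$, only the \emph{inclusions} $h(m)\subseteq\bigcap_{n<^*_xm}\D(h(n))$ for $m\in D^*(x)\setminus\{0\}$, and, crucially, $h(m)\neq\emptyset$ for \emph{every} $m\in D^*(x)$. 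For $A\in\C$ the everywhere-nonemptiness together with $\D^{\infty}(A)=\emptyset$ yields an order-preserving map of $<^*_x$ into $|A|_{\D}$, which is what rules out ill-founded codes and gives the relation ``$x\in\wf$ and $|x|^*\le|A|_{\D}$''; a second relation adding $\bigcap_{m\in D^*(x)}\D(h(m))=\emptyset$ pins down ``$|x|^*=|A|_{\D}$'', and combining the two through a common code $x$ and its successor $x'$ produces the correct $P$ and $Q$.
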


A dual notion of derivatives is the concept of expansion. 
\begin{definition}
A map $\E:\K(X) \rightarrow \K(X)$ is an {\bf expansion}  means that \[A \subseteq \E(A) \ \ \ \ \& \ \ \  A \subseteq B \implies \E(A) \subseteq \E(B)\]
For an expansion $E$, as earlier, we let 
\begin{align*}
    &\E ^{0} (A) =  A\\
    &\E ^{\alpha+1}  =  \E (\E^{\alpha}(A))\\
    &\E ^{\lambda} (A) = \overline{\cup _{\beta < \lambda} \E^{\beta}(A)} \textit { if } \lambda \textit{ is a limit ordinal.}
\end{align*}
We let $|A|_{\E}$ be the least such $\alpha$ such that $\E ^{\alpha+1}  =  \E^{\alpha}(A)$.  Moreover, we let $\E ^{\infty} (A)=\E ^{|A|_{\E}}$, i.e., the stable part of $A$.
\end{definition}
For every expansion one can define a derivative (and vice-versa). Furthermore, one can formulate the above Theorem \ref{BDerivatives} in terms of expansions. We will prove a variant of this dual.

\begin{theorem}\label{BorelExp}
Let $X$ be a compact metric space and $\E$ be a Borel expansion on $\K(X)$ and let 
\[ \C= \{A \in K(X): \E^{\alpha} (A) =X \textit { for some } \alpha\}.
\]
Then, $\C$ is $\Pi^1_1$ and $\varphi : \C \rightarrow \omega_1$ defined by $\varphi(A)= |A|_\E$ is a $\Pi^1_1$-rank on $\C$.
\end{theorem}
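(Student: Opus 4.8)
The plan is to adapt the proof of Theorem~\ref{BDerivatives} (\cite[Theorem~34.10]{Kechris}) rather than to reduce to it: passing to complements turns the closure taken at limit stages of an expansion into an interior operation, and the induced map on open sets is not a derivative in the sense used there, so a genuine variant of the argument is needed. There are two tasks: (i) show $\C$ is $\Pi^1_1$; (ii) produce $\Sigma^1_1$ relations for $\varphi$ as in Proposition~\ref{altpi11}.

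For (i), note the following description of the complement of $\C$. If $F \in \K(X)$ satisfies $A \subseteq F$, $F \neq X$ and $\E(F) = F$, then a transfinite induction --- using monotonicity of $\E$ at successor stages and closedness of $F$ at limit stages --- yields $\E^\alpha(A) \subseteq F \subsetneq X$ for every $\alpha$, so $A \notin \C$; conversely, if $A \notin \C$ then $F := \E^\infty(A)$ is such a set, since $\E(\E^\infty(A)) = \E^\infty(A)$, $A = \E^0(A) \subseteq \E^\infty(A)$, and $\E^\infty(A) \neq X$. Hence
\[
A \notin \C \iff \exists F \in \K(X) \, \bigl[\, A \subseteq F \ \wedge\ F \neq X \ \wedge\ \E(F) = F \,\bigr].
\]
Since $\E$ is Borel, the bracketed condition is Borel in $(A,F) \in \K(X)^2$, so the right-hand side is $\Sigma^1_1$; thus $\C$ is $\Pi^1_1$.

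For (ii), it suffices to construct, in a Borel manner, for every $A \in \K(X)$ a code $x_A \in \dubCan$ for a well-order such that $x_A \in \wf$ if and only if $A \in \C$, and, when $A \in \C$, the order type of $x_A$ equals $|A|_\E$. Granting this, recall that the order-type function is a $\Pi^1_1$-rank on $\wf$ (\cite[Section~34]{Kechris}), so Proposition~\ref{altpi11} furnishes $\Sigma^1_1$ relations $P', Q' \subseteq \dubCan \times \dubCan$ for it; then $P := \{(A,B) : (x_A, x_B) \in P'\}$ and $Q := \{(A,B) : (x_A, x_B) \in Q'\}$ are $\Sigma^1_1$ (as $A \mapsto x_A$ is Borel), and for $B \in \C$ one checks $\{A : (A,B) \in P\} = \{A \in \C : \varphi(A) \leq \varphi(B)\}$ and $\{A : (A,B) \in Q\} = \{A \in \C : \varphi(A) < \varphi(B)\}$, so Proposition~\ref{altpi11} gives that $\varphi$ is a $\Pi^1_1$-rank on $\C$.

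The remaining step --- the construction of $x_A$ --- is the crux, and is where the proof of \cite[Theorem~34.10]{Kechris} has to be reworked. Following that proof (suitably dualized), one fixes a compatible metric on $X$ and a countable basis $\{W_n\}_{n \in \N}$ of nonempty open sets and attaches to $A$ a tree of finite sequences of basic open sets that are nested, have diameters shrinking to $0$, and meet the finite iterates $\E^k(A)$ ($k \in \N$) --- which depend on $A$ in a Borel way since $\E$ is Borel --- under conditions chosen so that the tree is well-founded precisely when $\E^\infty(A) = X$, i.e. precisely when $A \in \C$; one then passes from this well-founded tree to a well-order code $x_A$ of the same ordinal rank by a standard Borel operation and checks that this rank is $|A|_\E$. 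The main obstacle is exactly this check. In contrast with the derivative setting, whether a point lies in $\E^{\alpha+1}(A)$ is controlled by $\E$ applied to the whole closed set $\E^\alpha(A)$ rather than by finitely much local information, and at a limit $\lambda$ membership in $\E^\lambda(A) = \overline{\bigcup_{\beta < \lambda} \E^\beta(A)}$ is strictly weaker than membership in some earlier stage. Arranging the tree so that its ordinal rank registers the full transfinite length $|A|_\E$, not merely a cofinal family of expansion stages, while keeping $A \mapsto x_A$ Borel, is the delicate point.
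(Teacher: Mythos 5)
Your part (i) is correct and is essentially the paper's own argument: $A \notin \C$ iff $A$ is contained in some fixed point $F \neq X$ of $\E$, and since $\{F : \E(F)=F\}$ is Borel this exhibits the complement of $\C$ as a projection of a Borel set.

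Part (ii), however, has a genuine gap, and it sits exactly where you place it. Your strategy --- produce a Borel map $A \mapsto x_A$ with $x_A \in \wf$ iff $A \in \C$ and $|x_A|^* = |A|_\E$ on $\C$ --- would indeed suffice by the pullback argument you give, but it demands strictly more than the theorem needs, and you never construct it: the tree-of-basic-open-sets device is only gestured at, and you yourself concede that making its rank equal $|A|_\E$ exactly, while keeping $A \mapsto x_A$ Borel, is unresolved. Since that construction is the entire content of the theorem, the proof is incomplete as written; and the obstacle is real, because for a general Borel expansion membership in $\E^{\alpha+1}(A)$ has no local, basis-indexed witness, so there is no evident canonical well-order of type exactly $|A|_\E$ attached to $A$ in a Borel fashion. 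The paper avoids needing any canonical code by working with relations instead of a function: it defines $\Sigma^1_1$ sets $\cR, \cS \subseteq \lo \times K(X)$ by existentially quantifying over a witnessing sequence $h \in K(X)^{\N}$ with $h(0)=A$, $h(m) \neq X$ for $m \in D^*(x)$, and $\overline{\cup_{n \lex m} \E(h(n))} \subseteq h(m)$ for $m \in D^*(x)\setminus\{0\}$ (for $\cS$ one additionally requires $\overline{\cup_{m \in D^*(x)} \E(h(m))} = X$). These are $\Sigma^1_1$ because $(A_n) \mapsto \overline{\cup_n A_n}$ is Borel (Lemma~\ref{BorelLimit}), which is also where the closure-at-limit-stages operation is absorbed. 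Condition (R) --- that for $A \in \C\setminus\{X\}$ the codes $x$ with $(x,A)\in\cR$ are exactly the wellorderings of type at most $|A|_\E$ --- is then verified by extracting from any witness $h$ an order-preserving map of $\lex$ into $|A|_\E$, and $\cR$, $\cS$ are combined via a Borel successor map on $\lo$ into the relations $\cP$, $\cQ$ required by Proposition~\ref{altpi11}. Replacing your sought-after reduction by these two relations is the missing idea you need to complete the argument.
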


Before proving the theorem, let us show a specific instance of a Borel expansion, the $\Gamma$ map.

\begin{definition}
Let $X$ be a compact metric space and $E\subseteq X^{2}$ a closed set.
We define $E^+$ as the smallest equivalence relation that contains
$E$ and $\Gamma(E)=\overline{E^+}$. For an ordinal $\alpha$, $\Gamma^{\alpha}(E)$ is defined by
\[
\Gamma^{\alpha}(E) = \Gamma( \Gamma^{\alpha -1}(E )) 
\]
if $\alpha$ is the successor ordinal and
\[
\Gamma^{\alpha}(E) = \overline {\cup_{\beta < \alpha} \Gamma^{\beta}(E)}
\]
if $\alpha$ is a limit ordinal.  
\end{definition}

Recall that in a topological  space with countable basis, a chain of strictly increasing sequence of closed sets must be countable. From this we have the following.
\begin{proposition}
Let $X$ be a compact metrizable space and $E\subset X$. There exists a countable ordinal $\alpha$ such that $\Gamma^{\alpha}%
(E)=\Gamma^{\alpha+1}(E)$.
\end{proposition}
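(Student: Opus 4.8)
The plan is to prove the statement by contradiction, invoking the recalled fact that a space with a countable basis admits no strictly increasing $\omega_{1}$-chain of closed sets. First I would record two structural properties of the transfinite sequence $\big(\Gamma^{\beta}(E)\big)_{\beta}$ (here $E\subseteq X^{2}$ is closed, as in the preceding definition). Every $\Gamma^{\beta}(E)$ is a closed subset of $X^{2}$: at the base this is the hypothesis that $E$ is closed; at a successor $\beta+1$ it holds because $\Gamma^{\beta+1}(E)=\overline{(\Gamma^{\beta}(E))^{+}}$ is a closure; and at a limit $\lambda$ it holds because $\Gamma^{\lambda}(E)=\overline{\bigcup_{\beta<\lambda}\Gamma^{\beta}(E)}$ is again a closure. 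Next I would verify, by transfinite induction on $\beta$, that $\beta\mapsto\Gamma^{\beta}(E)$ is $\subseteq$-nondecreasing: at a successor one has $\Gamma^{\beta}(E)\subseteq(\Gamma^{\beta}(E))^{+}\subseteq\overline{(\Gamma^{\beta}(E))^{+}}=\Gamma^{\beta+1}(E)$, which together with the induction hypothesis gives $\Gamma^{\gamma}(E)\subseteq\Gamma^{\beta+1}(E)$ for all $\gamma\le\beta$; and at a limit $\lambda$ every $\Gamma^{\gamma}(E)$ with $\gamma<\lambda$ sits inside $\bigcup_{\beta<\lambda}\Gamma^{\beta}(E)\subseteq\Gamma^{\lambda}(E)$.

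With these in hand, suppose toward a contradiction that $\Gamma^{\alpha}(E)\subsetneq\Gamma^{\alpha+1}(E)$ for every countable ordinal $\alpha$. Then for $\alpha<\beta<\omega_{1}$ the monotonicity just established yields $\Gamma^{\alpha}(E)\subseteq\Gamma^{\alpha+1}(E)\subseteq\Gamma^{\beta}(E)$ with the first inclusion strict, hence $\Gamma^{\alpha}(E)\subsetneq\Gamma^{\beta}(E)$. Thus $\big(\Gamma^{\alpha}(E)\big)_{\alpha<\omega_{1}}$ is a strictly increasing $\omega_{1}$-indexed chain of closed subsets of $X^{2}$. Since $X$ is compact metrizable, so is $X^{2}$, and in particular $X^{2}$ has a countable basis; this contradicts the recalled fact. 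Therefore some countable $\alpha$ satisfies $\Gamma^{\alpha}(E)=\Gamma^{\alpha+1}(E)$.

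For completeness I would also spell out the one-line proof of the recalled fact. Fix a countable basis $\mathcal{B}$ of $X^{2}$. Given a strictly increasing chain $(F_{\alpha})$ of closed sets, for each $\alpha$ choose $p_{\alpha}\in F_{\alpha+1}\setminus F_{\alpha}$ and then, using that $F_{\alpha}$ is closed, choose $B_{\alpha}\in\mathcal{B}$ with $p_{\alpha}\in B_{\alpha}$ and $B_{\alpha}\cap F_{\alpha}=\emptyset$. If $\alpha<\beta$ then $B_{\alpha}$ meets $F_{\alpha+1}\subseteq F_{\beta}$ while $B_{\beta}\cap F_{\beta}=\emptyset$, so $B_{\alpha}\ne B_{\beta}$; hence $\alpha\mapsto B_{\alpha}$ injects the chain into the countable set $\mathcal{B}$.

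I do not expect a genuine obstacle here. The only points that need a little care are phrasing the transfinite induction for monotonicity so that strictness at successor stages propagates to strict inclusion for all pairs $\alpha<\beta$, and remembering to use the hypothesis that $E$ is closed in the base case so that the entire transfinite sequence consists of closed sets, which is what makes the countable-basis argument applicable.
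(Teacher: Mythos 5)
Your proof is correct and follows essentially the same route as the paper, which simply invokes the recalled fact that a second countable space admits no strictly increasing uncountable chain of closed sets; you merely fill in the routine details (closedness and monotonicity of the transfinite sequence, and a proof of the recalled fact) that the paper leaves implicit.
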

The smallest ordinal that satisfies the statement in the previous proposition is called the $\Gamma$-\textbf{rank of} $E$.

Before proving that $\Gamma:K(X\times X)\rightarrow K(X\times X)$ we will prove a lemma. 
\begin{lemma}\label{BorelLimit}
Let $X$ be a compact metrizable space, $\varphi_n: K(X) \rightarrow K(X)$ be a Borel map, $n \in \N$, and $\varphi:K(X) \rightarrow K(X)$ defined by
\[ \varphi(A) := \overline{\cup_{n=1}^{\infty} \varphi_n (A)}
\]
Then, $\varphi$ is Borel.
\end{lemma}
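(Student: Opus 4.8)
The plan is to reduce Borelness of $\varphi$ to a statement about preimages of a generating family for the Borel $\sigma$-algebra of $\K(X)$. Since $X$ is compact metrizable, $\K(X)$ with the Hausdorff metric is compact metrizable, and its Borel $\sigma$-algebra is generated by the sets $\{K \in \K(X) : K \cap U \neq \emptyset\}$ as $U$ ranges over the open subsets of $X$ (this coincides with the Effros Borel structure; see \cite[Section 12.C]{Kechris}). Hence it suffices to show that, for each open $U \subseteq X$, the set $\varphi^{-1}\big(\{K : K \cap U \neq \emptyset\}\big) = \{A \in \K(X) : \varphi(A) \cap U \neq \emptyset\}$ is Borel. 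First I would record that $\varphi$ really does map into $\K(X)$: each $\varphi(A)$ is closed in the compact space $X$, hence compact (this is the only place compactness of $X$ enters).

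The next step is the elementary observation that, because $U$ is open, a set meets $U$ if and only if its closure meets $U$; consequently
\[
\varphi(A) \cap U \neq \emptyset \iff \Big(\bigcup_{n=1}^{\infty}\varphi_n(A)\Big) \cap U \neq \emptyset \iff \exists\, n \in \N:\ \varphi_n(A) \cap U \neq \emptyset .
\]
Therefore
\[
\{A \in \K(X) : \varphi(A) \cap U \neq \emptyset\} \;=\; \bigcup_{n=1}^{\infty} \varphi_n^{-1}\big(\{K \in \K(X): K \cap U \neq \emptyset\}\big).
\]
Each term on the right is Borel, being the preimage under the Borel map $\varphi_n$ of a set that is open (in particular Borel) in $\K(X)$, and a countable union of Borel sets is Borel; so the left-hand side is Borel. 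As $U$ was an arbitrary open subset of $X$, this proves that $\varphi$ is Borel.

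I do not expect a serious obstacle here; the argument is essentially routine once the reduction to the ``hitting open sets'' generators is in place. The one point worth pausing on is the interchange of the closure with the ``meets $U$'' test, which is valid precisely because $U$ is open. If one instead tried to verify Borelness against generators of the form $\{K : K \subseteq U\}$, the closure would genuinely interfere and one would be forced to pass to open $\varepsilon$-neighbourhoods of $X\setminus U$; working with the hitting-sets generators sidesteps this, which is why it is the natural route.
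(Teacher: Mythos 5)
Your argument is correct, but it takes a genuinely different route from the paper. You verify Borelness directly against the hitting-set generators: using that the Borel $\sigma$-algebra of $K(X)$ (Hausdorff metric) is generated by the sets $\{K : K\cap U\neq\emptyset\}$, $U$ open, and that an open set meets a union iff it meets the closure of that union, you write $\varphi^{-1}(\{K : K\cap U\neq\emptyset\})=\bigcup_n \varphi_n^{-1}(\{K : K\cap U\neq\emptyset\})$, a countable union of Borel sets. The paper instead argues by composition and limits: it packages $(\varphi_1,\dots,\varphi_n)$ into a Borel map into $K(X)^n$, uses continuity of the finite-union map to get that $A\mapsto\bigcup_{i=1}^n\varphi_i(A)$ is Borel, and then observes that $\varphi$ is the pointwise limit of these maps (an increasing sequence of compact sets converges in the Hausdorff metric to the closure of its union), so $\varphi$ is Borel as a pointwise limit of Borel maps into a metrizable space. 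Your approach buys a more self-contained, elementary verification that needs only the generator characterization of the Borel structure and the fact that preimages of generators suffice (compactness of $X$ enters only to ensure $\varphi(A)\in K(X)$, as you note); the paper's approach is shorter if one takes for granted the standard facts about pointwise limits of Borel maps and Hausdorff-metric convergence of increasing unions, which your route entirely sidesteps. Your closing remark about why the containment generators $\{K : K\subseteq U\}$ would be awkward is also accurate.
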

\begin{proof}
Define $\psi _n :K(X) \rightarrow K(X)^n$ by 
\[ \psi _n (A) := (\varphi_1(A), \ldots, \varphi_n(A)).
\]
Then, $\psi_n$ is Borel. Moreover, as the union map is continuous, we have that, for each $n \in \N$, $A \rightarrow \cup_{i=1}^n  \varphi_i(A)$ is Borel. Now $\varphi$ is simply the pointwise limit of these maps and hence itself Borel. 
\end{proof}
\begin{proposition}\label{GammaBorel}
Let $X$ be a compact metric space. Then, 
\[
\Gamma : K(X \times X) \rightarrow K (X \times X)
\]
is a Borel map. 
\end{proposition}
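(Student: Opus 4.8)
The plan is to realize $\Gamma$ as the ``closed union'' of a sequence of Borel self-maps of $K(X\times X)$ and then apply Lemma~\ref{BorelLimit} with the compact metrizable space $X\times X$ in the role of $X$. First I would record the standard finite approximations to the smallest equivalence relation: writing $\Delta\subseteq X\times X$ for the diagonal, set $\varphi_1(E):=E\cup E^{-1}\cup\Delta$ and $\varphi_{n+1}(E):=\varphi_n(E)\circ\varphi_1(E)$, where $\circ$ is composition of relations. A routine induction shows that every $\varphi_n(E)$ is reflexive and symmetric, that $\varphi_n(E)\subseteq\varphi_{n+1}(E)$, and that $E^+=\bigcup_{n\ge 1}\varphi_n(E)$, so that
\[
\Gamma(E)=\overline{E^+}=\overline{\textstyle\bigcup_{n\ge 1}\varphi_n(E)}.
\]
Since $X$ is compact, the composition $R\circ S$ of two closed subsets of $X\times X$ is again closed: it is the image of the closed set $\{(x,y,z):(x,y)\in S,\ (y,z)\in R\}$ under the projection $X^3\to X\times X$ onto the outer two coordinates, and projections of compact sets are compact. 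Hence each $\varphi_n$ is a genuine map $K(X\times X)\to K(X\times X)$.

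It remains to prove that each $\varphi_n$ is Borel. The map $E\mapsto E^{-1}$ is continuous, being induced by the coordinate-swap homeomorphism of $X\times X$, and $(A,B,C)\mapsto A\cup B\cup C$ is continuous on $K(X\times X)^3$, so $\varphi_1$ is continuous. Writing $c(R,S):=R\circ S$ we have $\varphi_{n+1}(E)=c(\varphi_n(E),\varphi_1(E))$, so by induction it suffices to show that
\[
c:K(X\times X)\times K(X\times X)\to K(X\times X)
\]
is Borel. I expect this to be the main obstacle: $c$ is genuinely discontinuous on the hyperspace, because an arbitrarily small perturbation of $R$ and $S$ can destroy every witness $y$ with $(x,y)\in S$ and $(y,z)\in R$, so $c$ cannot be built from continuous operations and one must argue Borelness directly.

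To show $c$ is Borel I would first show it is upper semicontinuous and then invoke the general fact that an upper semicontinuous map into the hyperspace of a compact metric space is Borel. For upper semicontinuity: if $(R_k,S_k)\to(R,S)$ and $(x_k,z_k)\in R_k\circ S_k$ with $(x_k,z_k)\to(x,z)$, choose $y_k$ with $(x_k,y_k)\in S_k$ and $(y_k,z_k)\in R_k$; by compactness of $X$ pass to a subsequence with $y_k\to y$, and then $(x,y)\in S$ and $(y,z)\in R$ because $d((x_k,y_k),S)\le d_H(S_k,S)\to 0$ and similarly for $R$. Thus $\limsup_k(R_k\circ S_k)\subseteq R\circ S$, which yields that $\{(R,S):R\circ S\subseteq U\}$ is open for every open $U\subseteq X\times X$. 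Writing such a $U$ as an increasing union of closed sets $C_m$, one gets $\{(R,S):(R\circ S)\cap U\neq\emptyset\}=\bigcup_m\{(R,S):(R\circ S)\cap C_m\neq\emptyset\}$, and each set in this union is the complement of one of the open sets just described, so the whole set is $F_\sigma$. Sets of these two kinds generate the Borel $\sigma$-algebra of $K(X\times X)$, so $c$ is Borel.

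Putting the pieces together, each $\varphi_n:K(X\times X)\to K(X\times X)$ is Borel, so Lemma~\ref{BorelLimit} applies and gives that $E\mapsto\overline{\bigcup_{n\ge 1}\varphi_n(E)}=\Gamma(E)$ is Borel, which is the assertion.
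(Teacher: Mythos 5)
Your proof is correct, and it follows the same overall decomposition as the paper: write $\Gamma(E)$ as the closed union of the $n$-step chain relations generated by the reflexive--symmetric closure of $E$ (your $\varphi_n$ plays the role of the paper's $\sim_n(\,\cdot^{+})$), check that each finite stage is a Borel self-map of $K(X\times X)$, and finish with Lemma~\ref{BorelLimit}. Where you differ is in how the finite stages are shown to be Borel: the paper asserts that the chain maps are \emph{continuous}, whereas you observe that composition of relations is genuinely discontinuous on the hyperspace and prove only upper semicontinuity, deducing Borelness from the fact that the preimages of the subbasic sets $\{K: K\subseteq U\}$ and $\{K: K\cap U\neq\emptyset\}$ are respectively open and $F_\sigma$. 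You are right on this point, and your caution is warranted: taking $E=\{(0,\tfrac12),(\tfrac12,1)\}$ and $E_k=\{(0,\tfrac12),(\tfrac12+\tfrac1k,1)\}$ in $K([0,1]^2)$, one has $E_k\to E$ while $(0,1)$ belongs to the $2$-chain relation of $E$ but stays at distance bounded away from zero from that of every $E_k$, even after passing to reflexive--symmetric closures; so continuity of these maps fails and upper semicontinuity is what one actually has. Thus your argument is not merely an alternative route but supplies the justification the paper's proof skips over, at the cost of one extra (standard) lemma: an upper semicontinuous hyperspace-valued map on a compact metric space is of Baire class one, hence Borel. The remaining ingredients --- closedness of $R\circ S$ via projection of a compact set, the identification $E^{+}=\bigcup_n\varphi_n(E)$, and the appeal to Lemma~\ref{BorelLimit} --- are all in order.
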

\begin{proof}
We first note that $A \rightarrow A^{+}$ is a continuous map. Define $\sim_n: K (X \times X) \rightarrow K (X \times X)$ by 
$\sim_n (A) :=\{(x,y):\exists x=x_0, \ldots x_n =y \mbox { such that } (x_i, x_{i+1}) \in A \ \  \forall \ 0 \le i <n\}$. We note that $\sim_n$ is a continuous map. Hence, the map $A \rightarrow \sim_n(A^{+})$ is continuous. Now we have that 
\[ \Gamma(A) = \overline{ \cup_{n=1}^{\infty} \sim_n (A^{+})}
\]
is Borel by Lemma~\ref{BorelLimit}.
\end{proof}\\

We now proceed to prove Theorem~\ref{BorelExp}. We follow the general outline of \cite[Theorem~34.10]{Kechris} adapted to this set up. 

\begin{proof}[Proof of Theorem~\ref{BorelExp}]
It suffices to show that $|\cdot|_{\E}$ is a $\Pi^1_1$-rank on $\C \setminus \{X\}$.

We first show that $\C$ is $\Pi^1_1$. As $\E$ is Borel, $gr(\E)$, the graph of $E$, is also Borel. Let $\Delta$ be the diagonal of $K(X) \setminus \{X\}$. Then, $gr(\E)\cap \Delta$ is Borel. As
\[\F = \{A \in K(X): \E(A)=A  \ \&\  A \neq X\}\] is the 1-1 projection of  the Borel set $gr(\E)\cap \Delta$, we have that $\F$ is Borel. Hence,
\[ \G = \{(A,B) \in K(X) \times K(X): B \in \F \ \ \& \ \ A   \subseteq B\}
\]
is Borel as it is the intersection of two sets, one closed and the other Borel, namely, 
\[\{(A,B) \in K(X) \times K(X): A   \subseteq B\} \ \ \ \  \ K(X) \times \F.
\]
As the projection of Borel sets are $\Sigma^1_1$, and  \[A \notin \C \Longleftrightarrow  (A,B) \in \G \textit{ for some B},
\]
we have that $K(X) \setminus \C$ is $\Sigma^1_1$, or, equivalently, $\C$ is $\Pi^1_1$.

We proceed to construct required $\Sigma^1_1$ sets as  in Proposition~\ref{altpi11}. In order to do this we need a  a variant of a standard combinatorial  $\Pi^1_1$ set as in the proof of \cite[Theorem~34.10]{Kechris}. We recall the basic terminology.

For $x \in \dubCan$ we let \[D^*(x) = \{m \in \N: x(m,m) =1\}\] and we define \[m \leex n \Leftrightarrow [m, n \in D^*(x) \ \&  \ x(m,n) =1]. \]
We let $\lo$ be the set of all $x \in \dubCan$ such that $\leex$ is a linear order, $0 \in D^*(x)$ and $0 \leex m$ for all $m \in D^*(x)$ and let $\wf$ be the set of all $x \in \lo$ such that $\leex$ is a wellordering. It is known that $\lo$ is a closed subset of $\dubCan$ and $\wf$ is a $\Pi^1_1$-complete subset of $\dubCan$ and $x \mapsto |x|^*$ is a $\Pi^1_1$ rank on $\wf$ where $|x|^*$ is the order type of $x \in \wf$. Moreover, the range of $\wf$ is $\omega_1 \setminus \{0\}$.

We next show that it suffices to construct $\Sigma^1_1$ subsets $\cR$ and $\cS$ of $\lo \times K(X)$ which satisfy the following properties:
\begin{align*}
   & \forall A \in \C \setminus \{X\},  & \{x \in \lo : (x, A) \in \cR \} = \{x \in \wf: |x|^* \le |A|_{\E}\}  \tag{R}  \\
     & \forall x \in \wf, &  \{A \in K(X) : (x, A) \in \cS \} = \{A \in \C : |x|^* = |A|_{\E}\}. \tag{S}
\end{align*}
Indeed, let 
\[ \cP = \{(A,B)  \in K(X) ^2 : \exists x \in \lo \textit{ such that } (x,B) \in \cR \ \& \ (x,A) \in \cS\}. \]
Then, $\cP $ is $\Sigma^1_1$ and for all $B \in \C \setminus \{X\}$ we have that 
\[ \{A \in \C \setminus \{X \}: |A|_{\E}\le |B|_{\E}\} = \{A \in \K(X): (A, B) \in \cP\}.\]
Indeed, the containment $\subseteq$  of the above equality is clear. To see the containment $\supseteq$, let $(A,B) \in \cP$ and $x\in \lo$ be such that $(x,B) \in \cR$ and $(x,A) \in \cS$. Applying  Condition (R) to our set $B \in \C \setminus \{X\}$, we have  that $x \in \wf$ and $|x|^* \le |B|_{\E}$. As $x \in \wf$ and $(x,A) \in \cS$, by Condition (S) we have that $A \in \C $ and $|x|^* = |A|_{\E}$. As $|x|^* >0$, we have that $A \neq X$. Hence, we have that $A \in \C \setminus \{X\}$ with $|A|_{\E} \le |B|_{\E}.$

In order to obtain $\cQ$, we choose a Borel function $x \mapsto x'$ from $\lo$ to $\lo$ such that $|x'|^* = |x|^* +1$ and $x \in \wf$ iff $x' \in \wf$.
We let 
\[ \cQ = \{(A,B)  \in K(X) ^2 : \exists x \in \lo \textit{ such that } (x',B) \in \cR \ \& \ (x,A) \in \cS\}. \]
As $x \mapsto x'$ is Borel, we have that $\cQ $ is $\Sigma^1_1$. Moreover for all $B \in \C \setminus \{X\}$ we have that 
\[ \{A \in \C \setminus \{X \}: |A|_{\E} < |B|_{\E}\} = \{A \in \K(X): (A, B) \in \cQ\}.
\]
Indeed, the containment $\subseteq$  of the above equality is clear. To see the containment $\supseteq$, let $(A,B) \in \cQ$ and $x\in \lo$ be such that $(x',B) \in \cR$ and $(x,A) \in \cS$. Applying  Condition (R) to our set $B \in \C \setminus \{X\}$,  we have that $x' \in \wf$ and $|x'|^* \le |B|_{\E}$. As $x' \in \wf$, we have that $x \in \wf$. Now, as $x \in \wf$ and  $(x,A) \in \cS$, by Condition (S) we have that $A \in \C $ and $|x|^* = |A|_{\E}$. As $|x|^* >0$, we have that $A \neq X$. Hence, we have that $A \in \C \setminus \{X\}$ with $|A|_{\E} = |x|^* < |x'|^* \le |B|_{\E}$, i.e., $|A|_{\E} < |B|_{\E}$

By Proposition~\ref{altpi11}, we have that $|\cdot|_{\E}$ is a $\Pi^1_1$-rank on $\C \setminus \{K\}$, provided that we construct $\Sigma^1_1$ sets $\cR,\cS$ with the required properties.

We define
\begin{align*}
  \cR = \{(x,A) \in \lo \times K(X) : \exists h \in K(X)^{\N} \textit{ s.t. } h(0) =A, \\  \&\ \  \forall m \in D^*(x), h(m) \neq X\\ \& \  m \in D^*(x) \setminus \{0\} \Rightarrow \overline{\cup_{n \lex m} \E(h(n))} \subseteq h(m) \}.  
\end{align*}
\begin{align*}
    \cS = \{(x,A) \in \lo \times K(X): \exists h \in K(X)^{\N} \textit{ s.t. }   h(0) =A,   \\  \&\ \  \forall m \in D^*(x), h(m) \neq X\\ \& \  m \in D^*(x) \setminus \{0\} \Rightarrow  \overline{\cup_{n \lex m} \E(h(n))} \subseteq h(m)\\
    \& \ \overline{\cup_{m \in D^*(x)} \E(h(m))} = X\}  
\end{align*}
By Proposition~\ref{BorelLimit} we have that $\overline {\cup _n}: K(X)^{\N} \rightarrow K(X)$ defined by $\overline {\cup _n} (A_n) = \overline {\cup _n (A_n)}$ is Borel. This fact together with the standard quantifier counting technique imply that $\cR$ and $\cS$ are $\Sigma^1_1$. 

Let us now observe that $\cR$ and $\cS$ satisfies Conditions (R) and (S), respectively.  

To see that $\cR$ satisfies Condition (R), let $A \in \C \setminus \{X\}$. That containment $\supseteq$ holds in Condition (R) follows directly from the definition of $\cR$. For the containment $\subseteq$, we note that if $m >0$ is in $D^*(x)$, then for some $\alpha < |A|_{\E}$ we have that $\overline{\cup_{n \lex m} \E(h(n))} \nsupseteq \E^{\alpha+1} (A)$. This is so, for otherwise, for all $\alpha < |A|_{\E}$ we would have that $ \E^{\alpha+1} (A) \subseteq \overline{\cup_{n \lex m} \E(h(n))} \subseteq h(m) \neq X$, implying that $\E^{\infty} (A)  = \overline{ \cup_{\alpha < |A|_{\E} }\E^{\alpha+1} (A) } \neq X$ and  that $A \notin \C$. Now we define $f$ from $\lex$ into $|A|_{\E}$. Define $f(0) =0$ and for $m \in D^*(x) \setminus \{0\}$, let 
\[ f(m) = \text{ the least } \alpha < |A|_{\E} \text{ such that } \overline{\cup_{n \lex m} \E(h(n))} \nsupseteq \E^{\alpha+1} (A). 
\]
It suffices to show that $f$ is order preserving, i.e., $m  \lex p $ implies $f(m) < f(p)$. Indeed,  this would imply that $x \in \wf$ and $|x|^* \le |A|_{\E}$. Finally, to see that $f$ is order preserving, let $m \lex p$, $m \neq 0$.  Then,
\[  \E^{f(m)}(A) = \overline{ \cup _{\alpha < f(m)} \E^{\alpha+1}(A) } \subseteq \overline{\cup_{n \lex m} \E(h(n))}.
\]
Hence, $\E^{f(m)} (A) \subseteq h(m)$, implying that $\E^{f(m)+1} (A) \subseteq \E(h(m))$. As $m \lex p$, we have that $\E^{f(m)+1} (A) \subseteq \E(h(m)) \subseteq \overline{\cup_{q \lex p} \E(h(q))}$, implying that $f(q) \ge f(m) +1$ and  exhibiting that $f$ is order preserving.

To see that $\cS$ satisfies Condition (S), let $x \in \wf$. That containment $\supseteq$ holds in Condition (S) follows directly from the definition of $\cS$. That containment $\subseteq$ holds follows from our order preserving function $f$ from $\lex$ into $|A|_{\E}$ and the fact that $\overline{\cup_{m \in D^*(x)} \E(h(m))} = X$.
\end{proof}

\section{Application: entropy rank}
  A set $I \subseteq \N$ has \textbf{positive density} if
$\liminf_n \frac{|I \cap [1,n]|}{n>0} >0$.

We say $(X,T)$ is a \textbf{topological dynamical system (TDS)} if $X$ is a compact metrizable space and $T:X\rightarrow X$ is a continuous function.
We say $(X_2,T_2)$ is a \textbf{factor} of $(X_1,T_1)$ if there exists a surjective continuous function $\phi : X_2\rightarrow X_1$ (called a \textbf{factor map}) such that $\varphi\circ T_{1}=T_{2}\circ\varphi$. 

 Given a TDS $(X,T)$ and $\{U,V\}\subset X$, we say $I \subset \N$ is an \textbf{independence set for $\{U,V\}$} if for all finite $J \subseteq I$,
and for all $(Y_j) \in \prod _{j\in J}{\mathcal A}$, we have that
$$\cap_{j\in J}T^{-j}(Y_j)\neq \emptyset.$$


Let $(X,T)$ be a TDS and $\mathcal{U},\mathcal{V}$ open covers of $X$. We denote the smallest cardinality of a subcover of $\mathcal{U}$ with $N(\mathcal{U})$, and
\[
\mathcal{U}\vee\mathcal{V}=\{U\cap V:U\in\mathcal{U}\text{ and }V\in\mathcal{V}\}.
\] We define the \textbf{entropy of
$(X,T)$ with respect to $\mathcal{U}$} as
\[
h_{\text{top}}(X,T,\mathcal{U})=\lim_{n\rightarrow\infty}\frac{1}{n }\log N(\vee^n_{m=1}T^{-m}(\mathcal{U})).
\]
The \textbf{(topological) entropy} of $(X,T)$ is defined as \[
h_{\text{top}}(X,T)=\sup_{\mathcal{U}}h_{\text{top}}(T,\mathcal{U}).
\]
\begin{definition}
A TDS has \textbf{complete positive entropy (CPE)}  if every non-trivial factor has positive entropy. 
\end{definition}
Let $X$ be a compact metrizable space and $C(X,X)$ be the set of all continuous functions from $X$ into $X$ endowed with the uniform topology. We will now define a subspace of $C(X,X)$.
\begin{definition}
Given a compact metrizable space $X$ we define 
\[
\text{CPE}(X)=\{T\in C(X,X): (X,T)\text{ has CPE}\}.
\]
\end{definition}

Local entropy theory was initiated in \cite{blanchard93}. For more information see the survey \cite{glasner2009local} or the book \cite{kerr2016ergodic}. 

\begin{definition}
Let $(X,T)$ be a TDS. We say that  $[x_1, x_2] \in X\times X$ is an \textbf{independence entropy pair (IE-pair) of $(X,T)$} if for every pair of open sets $A_1,A_2$, with $x_1\in A_1$ and $x_2\in A_2$, there exists an independence set for $\{A_1,
A_2\}$ with positive density. The set of IE-pairs of $(X,T)$ will be denoted by $E(X,T)$.
\end{definition}

	
	
	
	




We are particularly interested in studying the $\Gamma$-rank in the case when $E=E(X,T)$ is the set of independence
entropy pairs.

\begin{definition}
Let $(X,T)$ be a TDS. The $\Gamma$-rank of the set of entropy pairs is called the
\textbf{entropy rank }of $(X,T)$.
\end{definition}

An equivalent statement of the following result was proved in \cite{blanchard93} (also see \cite[Theorem 12.30]{kerr2016ergodic}).   
\begin{theorem}
A TDS has CPE if and only if $\Gamma^{\alpha}(E(X,T))=X^{2}$ where
$\alpha$ is the entropy rank of $(X,T)$.
\end{theorem}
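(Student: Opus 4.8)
The plan is to reduce the statement to an application of Theorem~\ref{BorelExp} together with the combinatorial characterization of CPE coming from local entropy theory. The two directions are handled as follows. For the ``if'' direction, suppose $\Gamma^\alpha(E(X,T)) = X^2$ where $\alpha$ is the entropy rank of $(X,T)$. We must show $(X,T)$ has CPE, i.e. that every non-trivial factor $(Y,S)$ has positive entropy. Arguing by contraposition, suppose $(Y,S)$ is a non-trivial factor with $h_{\mathrm{top}}(Y,S) = 0$, and let $\phi\colon X \to Y$ be the factor map. The key input is that the set $E(X,T)$ of IE-pairs \emph{generates} the relation whose quotient captures the maximal zero-entropy factor: concretely, by the structure theory in \cite{blanchard93} (cf.\ \cite[Theorem 12.30]{kerr2016ergodic}), the smallest closed $T\times T$-invariant equivalence relation containing $E(X,T)$ has quotient equal to the maximal zero-entropy (topological Pinsker) factor. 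Since $\alpha$ is the $\Gamma$-rank of $E(X,T)$, the set $\Gamma^\alpha(E(X,T))$ is precisely the smallest closed equivalence relation containing $E(X,T)$; so the hypothesis $\Gamma^\alpha(E(X,T)) = X^2$ says exactly that this maximal zero-entropy factor is trivial. But then the zero-entropy factor $(Y,S)$ would have to factor through the trivial system, forcing $Y$ to be a point, contradicting non-triviality.

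For the ``only if'' direction, assume $(X,T)$ has CPE. Again by the local entropy theory identification, $\Gamma^\alpha(E(X,T))$ is the smallest closed $T\times T$-invariant equivalence relation containing $E(X,T)$, and its quotient is the maximal zero-entropy factor of $(X,T)$. Since $(X,T)$ has CPE, this maximal zero-entropy factor has no non-trivial factor with positive entropy and itself has zero entropy, so by the CPE hypothesis applied to it (it is a factor of $(X,T)$) it must be trivial; hence the associated closed invariant equivalence relation is all of $X^2$, i.e. $\Gamma^\alpha(E(X,T)) = X^2$. The only subtlety here is to confirm that $\Gamma$ iterated transfinitely does reach the smallest \emph{closed} equivalence relation containing $E(X,T)$: at each successor stage $\Gamma$ takes the closure of the generated equivalence relation (Proposition~\ref{GammaBorel} shows this is well-defined), and at limit stages one takes closures of increasing unions; the stabilization at the $\Gamma$-rank $\alpha$ gives a closed set that is its own $\Gamma$-image, hence a closed equivalence relation, and minimality follows because every closed invariant equivalence relation containing $E$ is a fixed point of $\Gamma$ and so contains every $\Gamma^\beta(E)$ by transfinite induction.

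I would organize the write-up by first stating and citing precisely the structural fact from \cite{blanchard93,kerr2016ergodic} that I need: the $\Gamma$-closure of $E(X,T)$ equals the topological Pinsker relation, whose quotient is the maximal zero-entropy factor. Then both directions become short: CPE $\iff$ maximal zero-entropy factor is trivial $\iff$ Pinsker relation is $X^2$ $\iff$ $\Gamma^\alpha(E(X,T)) = X^2$. The main obstacle is not any of these implications individually but making sure the invariance and closedness bookkeeping is airtight --- in particular that $E(X,T)$ is indeed a closed $T\times T$-invariant subset of $X^2$ (so that $\Gamma$ applies to it in $K(X\times X)$), and that the Pinsker factor is genuinely the \emph{maximal} zero-entropy factor, so that CPE forces it to be trivial rather than merely forcing its entropy to vanish. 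Both of these are standard in local entropy theory, so the proof will mostly consist of assembling cited facts rather than new arguments; no delicate descriptive-set-theoretic input is needed here, since the $\Pi^1_1$-rank content was already delivered by Theorem~\ref{BorelExp} applied to $\E = \Gamma$.
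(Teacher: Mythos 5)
The paper gives no proof of this theorem: it is quoted as a known result, with the remark that an equivalent statement was proved in \cite{blanchard93} (see also \cite[Theorem 12.30]{kerr2016ergodic}). So there is no internal argument to compare against; what you have written is the standard derivation of the equivalence, and it is correct in outline. The one piece of your write-up that is a genuine (if routine) argument rather than a citation is the identification of the stabilized iterate with the smallest closed equivalence relation containing $E(X,T)$: if $F=\Gamma^{\alpha}(E(X,T))$ is stable then $F\subseteq F^{+}\subseteq\overline{F^{+}}=\Gamma(F)=F$, so $F$ is a closed equivalence relation, and minimality follows by transfinite induction from monotonicity of $\Gamma$ together with the observation that every closed equivalence relation containing $E(X,T)$ is a fixed point of $\Gamma$; your treatment of the limit stages is right. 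Everything else --- that this minimal closed invariant equivalence relation is the topological Pinsker relation, that its quotient is the \emph{maximal} zero-entropy factor, and that $E(X,T)$ is closed and $T\times T$-invariant so that invariance propagates through the induction --- is precisely the content of the cited results, so your proof is a reduction to the literature rather than an independent argument; that is the same level of rigor at which the paper treats the statement. If you write it up, make explicit that the generated equivalence relation $E^{+}$ contains the diagonal (as the paper's definition intends), so that the degenerate case $E(X,T)=\emptyset$ is handled consistently: then $\Gamma^{\alpha}(E(X,T))$ is the diagonal, which equals $X^{2}$ only for one-point $X$, matching the failure of CPE for any non-trivial zero-entropy system.
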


The following proposition was proved in \cite{darjiGR}. We give a proof for completeness. 
\begin{proposition}\label{Borelext}
 Consider the mapping $E: C(X,X) \rightarrow K(X \times X)$ given by $E(T) = E(X,T)$. Then, $E$ is a Borel map.
\end{proposition}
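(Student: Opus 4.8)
The plan is to unfold the definition of $E(X,T)$ as a descending intersection over all finite ``witness failures'' and show each layer is Borel-measurable in $T$, then appeal to a countable-stages argument. Concretely, fix a countable basis $\{B_k\}$ for $X$ and note that a pair $(x_1,x_2)$ is an IE-pair of $(X,T)$ iff for every pair of basic open sets $B_i \ni x_1$, $B_j \ni x_2$, there is a positive-density independence set for $\{B_i,B_j\}$. The existence of a positive-density independence set for a given pair $\{B_i,B_j\}$ is a property of $T$ alone; call the corresponding set of maps $W_{i,j} \subseteq C(X,X)$. The first key step is to verify that each $W_{i,j}$ is Borel in $C(X,X)$: membership is equivalent to $\sup\{\underline{d}(I): I \text{ an independence set for }\{B_i,B_j\}\} > 0$, and since for a finite $J\subseteq\N$ the condition ``$J$ is an independence set for $\{B_i,B_j\}$ under $T$'' — i.e.\ $\bigcap_{j\in J}T^{-j}(Y_j)\neq\emptyset$ for all choices $Y_j\in\{B_i,B_j\}$ — is an \emph{open} condition on $T$ (preimages of open sets under the continuous evaluation-type maps $T\mapsto T^{-j}(Y)$ behave well, and nonemptiness of a finite intersection of open sets is open), one can express $W_{i,j}$ using countably many such open conditions combined with a liminf/density computation over rationals, landing in a fixed finite level of the Borel hierarchy.

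Given that the $W_{i,j}$ are Borel, the second step is to transfer this to the set-valued map $T\mapsto E(X,T)$. I would use the standard characterization that a map $F\colon C(X,X)\to K(X\times X)$ is Borel iff for every open $U\subseteq X\times X$ the set $\{T: F(T)\cap U\neq\emptyset\}$ is Borel (equivalently, iff $\{T: F(T)\subseteq U\}$ is Borel for every open $U$), since these generate the Effros/Vietoris Borel structure on $K(X\times X)$, which coincides with the Hausdorff-metric Borel structure. For a basic open box $U = B_a\times B_b$, the condition $E(X,T)\cap (B_a\times B_b)\neq\emptyset$ needs to be rephrased in terms of the $W_{i,j}$. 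The subtlety here is that membership of a \emph{point} in $E(X,T)$ quantifies over \emph{all} neighborhoods of that point, so $E(X,T)\cap(B_a\times B_b)\neq\emptyset$ is not literally ``$T\in W_{a,b}$'': it asks for a point $(x_1,x_2)\in B_a\times B_b$ all of whose neighborhood pairs admit positive-density independence sets. I would handle this by exploiting the known structure of IE-pairs — $E(X,T)$ is closed, and one has the cover-based reformulation: $(x_1,x_2)$ with $x_1\neq x_2$ fails to be an IE-pair iff some pair of open neighborhoods has no positive-density independence set; running this through a basis and using compactness, $E(X,T)\cap(B_a\times B_b)\neq\emptyset$ becomes a countable Boolean combination (over shrinking sub-basic boxes inside $B_a\times B_b$) of the Borel conditions $T\in W_{i,j}$, hence Borel.

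The main obstacle I anticipate is precisely this last translation: showing that ``there exists a point in the box, all of whose neighborhood-pairs are good'' is Borel, rather than merely coanalytic, since a naive rendering introduces an existential quantifier over points of $X$ (harmless, being over a compact/Polish factor) nested with a universal quantifier over neighborhoods. The resolution should be that because $E(X,T)$ is closed and the ``goodness'' of a neighborhood pair is monotone under shrinking, one can replace the point-quantifier by a countable intersection/union over basic boxes and collapse the alternation — this is exactly where I would need to be careful, and it is the heart of the argument; everything else is routine quantifier-counting and the observation that finite-intersection-nonemptiness is an open condition on $T$. Once $\{T: E(X,T)\cap U\neq\emptyset\}$ is shown Borel for all open $U$ in a countable base of boxes, Borelness of $E$ as a map into $K(X\times X)$ follows immediately, completing the proof.
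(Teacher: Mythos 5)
Your overall strategy is the same as the paper's: reduce Borelness of $E$ to Borelness of the hit-sets $\{T: E(X,T)\cap (U\times V)\neq\emptyset\}$ for basic open boxes, observe that for a fixed finite $F$ the condition ``$F$ is an independence set for $(U,V)$ under $T$'' is open in $T$, and then quantifier-count. Those parts are fine. But the step you yourself flag as ``the heart of the argument'' --- translating $E(X,T)\cap(U\times V)\neq\emptyset$ into a Borel condition on $T$ --- is a genuine gap, and the fix you sketch (closedness of $E(X,T)$, monotonicity of ``goodness'' under shrinking, compactness over sub-basic boxes) does not close it. Monotonicity goes the wrong way for what you need: an independence set for a smaller pair is automatically one for a larger pair, so goodness passes \emph{up}, not down. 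The hard direction is: given that $\{U,V\}$ admits a positive-density independence set, produce a single point $(x_1,x_2)\in \overline{U}\times\overline{V}$ \emph{all} of whose neighborhood pairs admit one. Covering $\overline{U}$ and $\overline{V}$ by finitely many small basic sets, this requires that positive independence density for $\{U,V\}$ be inherited by some pair $\{U_i,V_j\}$ from the refinement --- a genuinely combinatorial fact (a Sauer--Shelah/Karpovsky--Milman-type counting argument), not a consequence of compactness or monotonicity. Without it, ``there exists a point in the box all of whose neighborhood pairs are good'' is an existential quantifier over $X^2$ applied to a Borel condition, i.e.\ only evidently $\Sigma^1_1$.

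The paper closes exactly this gap by citing Kerr and Li (\cite[Lemma~3.2]{KerrLiMA}), which packages the combinatorial inheritance property and yields the clean equivalence: $E(X,T)$ meets $U\times V$ if and only if there is a rational $r>0$ such that for every $l$ there is an interval $I$ with $|I|\ge l$ containing an independence set $F$ for $(U,V)$ with $|F|\ge r|I|$ --- a countable Boolean combination of the open conditions you already identified. (An alternative, heavier route you do not take would be the Arsenin--Kunugui theorem on Borel sets with compact sections, using that $\{(T,x_1,x_2): (x_1,x_2)\in E(X,T)\}$ is Borel with compact fibers over $T$; but neither you nor the paper argues that way.) So: same architecture as the paper, correct peripheral steps, but the central equivalence is asserted rather than proved, and the tool needed to prove it is precisely the external lemma the paper invokes.
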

\begin{proof}
Let $U,V$ be open in $X$. We first observe that
\[\{ T \in C(X,X):  \ E(T) \cap (U \times V)\neq \emptyset\}\tag{\(\dagger\)}
\]
is Borel. Indeed, using an equivalent definition of independence given in \cite[Lemma 3.2]{KerrLiMA} we have that $\dagger$ is satisfied by $T$ if and only if there is a rational number $ r >0 $ such that for all $l \in \N$ there is an interval $I \subseteq \N$ with $|I| \ge l$ and a finite set $F \subseteq I$ with $|F| \ge r |I|$ such that $F$ is an independent set for $(U,V)$.
It is easy to verify that for fixed $U,V, r, l, I, F$ set
\[
   \{T \in C(X,X): F \text{ is an independent set for } (U,V) \text{ for } T  \tag{\(\ddagger\)} \}  
\]
is open. Now the set in $\dagger$ is the result of a sequence of  countable union and countable intersections of sets of type $\ddagger$. Hence, $\dagger$ is Borel. Since $X$ has a countable basis, by taking unions, we have that $\dagger$ is Borel when $U \times V$ is replaced by any open set $W \subseteq X \times X$. Every closed set in $X \times X$ is the monotonic intersection of a sequence of open sets in $X \times X$. This and the fact that $E(T)$ is closed imply that $\dagger $ is Borel when $U \times V$ is replaced by a closed set $C \subseteq X \times X$. Reformulating the last statement, we have that for all open $W \in X \times X$, the set 
\[\{ T \in C(X,X):  \ E(T) \subseteq W \}\tag{\(\diamond\)}
\]
is Borel. Putting $\dagger$ and $\diamond$ together, we have that 
\[\{T \in C(X,X):  E(T) \subseteq \cup_{i=1}^n  (U_i \times V_i) \ \  \&  \  E(T) \cap (U_i \times V_i) \neq \emptyset, 1 \le i \le n \}
\]
is Borel whenever $U_1, \ldots,U_n, V_1, \ldots V_n$ are open in $X$, completing proof.

\end{proof}

\begin{theorem}\label{mainpi11}
Let $X$ be a compact metrizable space. Then, $\varphi: C(X,X) \rightarrow \omega_1$ defined by $\varphi (T) =  |E(X,T)|_{\Gamma}$ is a $\Pi^1_1$-rank on $C(X,X)$.
\end{theorem}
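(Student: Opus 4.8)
The plan is to exhibit $\varphi$ as a Borel pullback of the $\Gamma$-rank furnished by Theorem~\ref{BorelExp} and to transport the whole $\Pi^1_1$-rank structure along the Borel map $E$. First I would place myself in the hypotheses of Theorem~\ref{BorelExp} with the compact metric space taken to be $X\times X$: by Proposition~\ref{GammaBorel} the expansion $\Gamma:K(X\times X)\to K(X\times X)$ is Borel, so Theorem~\ref{BorelExp} yields that
\[ \C=\{A\in K(X\times X): \Gamma^{\alpha}(A)=X\times X\ \text{for some}\ \alpha\} \]
is $\Pi^1_1$ and that $A\mapsto |A|_{\Gamma}$ is a $\Pi^1_1$-rank on $\C$.

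Next I would make the witnesses explicit and pull them back. By Proposition~\ref{altpi11} there are $\Sigma^1_1$ relations $P_0,Q_0\subseteq K(X\times X)^2$ with, for every $B\in\C$,
\[ \{A\in\C:|A|_{\Gamma}\le|B|_{\Gamma}\}=\{A:(A,B)\in P_0\},\qquad \{A\in\C:|A|_{\Gamma}<|B|_{\Gamma}\}=\{A:(A,B)\in Q_0\}. \]
Since $E:C(X,X)\to K(X\times X)$ is Borel by Proposition~\ref{Borelext}, the map $(S,T)\mapsto(E(S),E(T))$ is Borel, so
\[ P=\{(S,T):(E(S),E(T))\in P_0\},\qquad Q=\{(S,T):(E(S),E(T))\in Q_0\} \]
are $\Sigma^1_1$ subsets of $C(X,X)^2$. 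As $\varphi=|\cdot|_{\Gamma}\circ E$, these are precisely the relations required by Proposition~\ref{altpi11} for $\varphi$.

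Then I would verify the defining equalities and pin down the $\Pi^1_1$ set carrying the rank. Here the dynamics enters: by the characterization of CPE recalled above, $\Gamma^{\alpha}(E(X,T))=X\times X$ for some $\alpha$ exactly when $(X,T)$ has CPE, so $E^{-1}(\C)=\text{CPE}(X)$, and being the Borel preimage of the $\Pi^1_1$ set $\C$ it is $\Pi^1_1$. For $T$ in this set one reads off from the defining property of $P_0,Q_0$ together with $\varphi=|\cdot|_{\Gamma}\circ E$ that $\{S:\varphi(S)\le\varphi(T)\}=\{S:(S,T)\in P\}$ and $\{S:\varphi(S)<\varphi(T)\}=\{S:(S,T)\in Q\}$, which by Proposition~\ref{altpi11} is exactly the assertion that $\varphi$ is a $\Pi^1_1$-rank.

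The hard part will be the domain-matching, i.e.\ checking that the transported relations $P,Q$ capture $\le_{\varphi}$ and $<_{\varphi}$ against the correct $\Pi^1_1$ set and not a larger one; this is the single point at which the formal pullback is not automatic, and it is where the characterization $E^{-1}(\C)=\text{CPE}(X)$ of the set on which $\Gamma$ stabilizes at $X\times X$ is essential. The remaining ingredients---that $\Sigma^1_1$ is preserved under Borel preimages and that $E^{-1}(\C)$ is $\Pi^1_1$---are routine descriptive-set-theoretic bookkeeping, so the only genuinely dynamical inputs are Proposition~\ref{Borelext} (Borelness of $E$) and the CPE characterization.
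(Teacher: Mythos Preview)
Your proposal is correct and follows essentially the same strategy as the paper: invoke Theorem~\ref{BorelExp} for $\Gamma$ on $K(X\times X)$ via Proposition~\ref{GammaBorel}, then pull the witnessing relations back along the Borel map $E$ of Proposition~\ref{Borelext}, using $E^{-1}(\C)=\mathrm{CPE}(X)$ to identify the $\Pi^1_1$ domain. The only cosmetic difference is that the paper transports the pair from Definition~\ref{pi11} (one $\Sigma^1_1$ and one $\Pi^1_1$ relation) while you transport the pair from Proposition~\ref{altpi11} (two $\Sigma^1_1$ relations); since Borel preimages preserve both pointclasses, either route works.
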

\begin{proof}
By Proposition~\ref{GammaBorel}, we have that $\Gamma$ is Borel. Setting $\E = \Gamma$ in Theorem~\ref{BorelExp}, we have that the map that takes $A \in K(X \times X)$ to $|A|_{\Gamma}$ is a $\Pi^1_1$-rank on the set $ \C ' = \{ A \in K(X \times X): \Gamma ^{|A|_{\Gamma}} (A) = X \}$.  By the definition of $\Pi^1_1$ rank, there are sets $P, Q \in K(X \times X ) ^2$, as in the Definition~\ref{pi11}, which verify that $ | \cdot |_{\Gamma}$ is a $\Pi^1_1$ rank on $\C '$. By Lemma~\ref{Borelext}, we have that $E$ is Borel and hence $(E \times E)^{-1}(P)$ and  $(E \times E)^{-1}(Q)$, are $\Sigma^1_1$, and $\Pi^1_1$ in $C(X,X) \times C(X,X)$, respectively. This and the fact that $E^{-1}(\C') = C$, we have that $(E \times E)^{-1}(P)$ and  $(E \times E)^{-1}(Q)$ exhibit that $\varphi$ is a $\Pi^1_1$-rank on $\C$.  
\end{proof}

Examples of TDSs with CPE and arbitrarily high entropy rank have been constructed in \cite{barbieri2020,salo2019entropy,darjiGR}.
\bibliographystyle{plain}
\bibliography{references}

\end{document}